\newdimen\bibspace
\renewenvironment{thebibliography}[1]{%
 \section*{\refname 
       \@mkboth{\MakeUppercase\refname}{\MakeUppercase\refname}}%
     \list{\@biblabel{\@arabic\c@enumiv}}%
          {\settowidth\labelwidth{\@biblabel{#1}}%
           \leftmargin\labelwidth
           \advance\leftmargin\labelsep
           \itemsep\bibspace
           \parsep\z@skip     %
           \@openbib@code
           \usecounter{enumiv}%
           \let\p@enumiv\@empty
           \renewcommand\theenumiv{\@arabic\c@enumiv}}%
     \sloppy\clubpenalty4000\widowpenalty4000%
     \sfcode`\.\@m}
    {\def\@noitemerr
      {\@latex@warning{Empty `thebibliography' environment}}%
     \endlist}
\newtheorem{thm}{Theorem}[section]
\newtheorem{prop}[thm]{Proposition}
\def\XXint#1#2#3{{\setbox0=\hbox{$#1{#2#3}{\int}$}
  \vcenter{\hbox{$#2#3$}}\kern-.5\wd0}}
\newcommand{\al}{\alpha}                \newcommand{\lda}{\lambda}
\newcommand{\om}{\Omega}                \newcommand{\pa}{\partial}
\newcommand{\va}{\varepsilon}           \newcommand{\ud}{\mathrm{d}}
\newcommand{\be}{\begin{equation}}      \newcommand{\ee}{\end{equation}}
\newcommand{\w}{\omega}                 
\newcommand{\R}{\mathbb{R}}
\begin{document}

\title{\textbf{On a conformally invariant integral equation involving Poisson kernel}
\bigskip}

\author{Jingang Xiong\footnote{Supported in part by NSFC 11501034, a key project of NSFC 11631002 and NSFC 11571019.}}

\date{ }

\maketitle

\begin{abstract} We study a prescribing functions problem of a conformally invariant integral  equation involving Poisson kernel on the unit ball. This integral equation is not the dual of any standard type of PDE.  As in Nirenberg problem, there exists a Kazdan-Warner type obstruction to existence of solutions. We prove existence in the antipodal symmetry functions class.
\end{abstract}

\section{Introduction}

Poisson integral and Riesz potential are basic objects in the singular integral theory; see Stein \cite{S}. Riesz potential is the dual of (fractional) Poisson equations. In \cite{JLX3}, Jin-Li-Xiong developed a blow up analysis procedure for critical nonlinear integral equations involving Riesez kernel and established a unified approach to the Nirenberg problem and its generalizations. The method is flexible; see Li-Xiong \cite{LX} for its application to compactness of  fourth order constant $Q$-curvature metrics. In this paper, we extend some analysis further to a natural critical nonlinear integral equations involving Poisson kernel.

Let $B_1$ be the unit ball in $\R^n$, $n\ge 2$. For each $v\in L^p(\pa B_1)$, $p\ge 1$, the Poisson integral of $v$ is defined by
\be\label{eq:poissonformula}
\mathcal{P} v(\xi)= \int_{\pa B_1}P(\eta,\xi) v(\eta)\,\ud s_\eta \quad \mbox{for }\xi \in B_1,
\ee
where $
P(\eta,\xi)= \frac{1-|\xi|^2}{n \w_n}\frac{1}{|\xi-\eta|^n}$ is the Poisson kernel
and $\w_n$ is the volume of the unit ball. Then $\mathcal{P} v$ is a harmonic function in $B_1$. If $n=2$, a classical inequality of Carleman \cite{Car} asserts that
\[
\int_{B_1}e^{2\mathcal{P} v}\,\ud \xi \le \frac{1}{4\pi}(\int_{\pa B_1} e^v\,\ud s)^2
\]
and the equality holds if and only if $v=c$ or $v=-2\ln |\xi-\xi_0|+c$ for some constant $c$ and $\xi_0\in \R^2\setminus \bar B_1$. If $n\ge 3$,  Hang-Wang-Yan \cite{HWY} proved that
\be\label{eq:main-ineq}
\|\mathcal{P} v\|_{L^{\frac{2n}{n-2}(B_1)}} \le S(n)\|v\|_{L^{\frac{2(n-1)}{n-2}}(\pa B_1)},
\ee
where $S(n)=n^{-\frac{n-2}{2(n-1)}}\w_n^{-\frac{n-2}{2n(n-1)}}$ and the equality holds if and only if $v=1$ up to a conformal transform on the unit sphere $\pa B_1$. In \cite{HWY2}, they studied \eqref{eq:main-ineq} on Riemannian manifolds. See also the recent paper Dou-Guo-Zhu \cite{DGZ} and references therein for other related results.
Motivated by the Nirenberg problem,  starting from this paper we study positive solutions of the Euler-Larange equation of the functional
\[
I[v]= \frac{ \int_{B_1} |\mathcal{P} v|^{\frac{2n}{n-2}} \,\ud \xi }{(\int_{\pa B_1} K|v|^{\frac{2(n-1)}{n-2}}\,\ud s)^{\frac{n}{n-1}}},
\]
where  $v\in L^{\frac{2(n-1)}{n-2}}(\pa B_1)$ is not zero and $K>0$ is a given continuous function. Namely,
\be \label{eq:problem}
K(\eta) v(\eta)^{\frac{n}{n-2}}= \int_{B_1} P(\eta,\xi) \mathcal{P} v(\xi)^{\frac{n+2}{n-2}}\,\ud \xi, \quad v>0 \quad \mbox{on }\pa B_1.
\ee
This equation is critical, conformally invariant and not always solvable. Indeed, a Kazdan-Warner type necessary condition was derived in \cite{HWY2}: For any conformal Killing vector field $X$ on $\pa B_1$, endowed with the induced metric from $\R^n$,
\be\label{eq:KW}
\int_{\pa B_1} (\nabla_X K)  v^{\frac{2(n-1)}{n-2}}\,\ud s=0
\ee
holds for any solution $v$ of \eqref{eq:problem}. For example, if $K=\xi_n+2$, there is no solution of \eqref{eq:problem}.

\begin{thm}\label{thm:A} Let $n\ge 3$ and $K\in C^1(\pa B_1)$ be a positive function satisfying $K(\xi)=K(-\xi)$.  For every $q>n-1$, there exists a constant $\delta>0$, depending only on $n$ and $q$, such that if for  a minimal point $\xi_1$ of $K$ there holds $K(\xi)-K(\xi_1)\le \delta |\xi-\xi_1|^q$ for all $\xi\in \pa B_1$, then equation \eqref{eq:problem} has at least one positive solution.
\end{thm}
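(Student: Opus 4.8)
The plan is to follow the subcritical approximation and blow-up analysis of \cite{JLX3}, carried out entirely within the antipodally symmetric class, and to close the argument with a localized Kazdan--Warner identity in which the hypothesis $q>n-1$ is exactly the condition needed. For small $\tau>0$ consider the functional obtained from $I$ by lowering the boundary exponent from $\frac{2(n-1)}{n-2}$ to $\frac{2(n-1)}{n-2}-\tau$. For $\tau>0$ the operator $v\mapsto\mathcal P v$ is compact into $L^{\frac{2n}{n-2}}(B_1)$, so the supremum of this functional over antipodally symmetric $v$ is attained, at some $v_\tau$; replacing $v_\tau$ by $|v_\tau|$ does not decrease the functional since $|\mathcal P v|\le\mathcal P|v|$, so we may take $v_\tau\ge0$, and since $P(-\eta,-\xi)=P(\eta,\xi)$ and $K(-\xi)=K(\xi)$ force the Euler--Lagrange defect of an antipodally symmetric critical point to be simultaneously even and odd, hence $\equiv0$, the function $v_\tau$ solves the subcritical analogue of \eqref{eq:problem}, with $v_\tau>0$ by the equation. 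Normalize, say, $\max_{\pa B_1}v_\tau=1$.

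Next one examines the dichotomy as $\tau\to0$. If $\{v_\tau\}$ stays bounded in $C^0$, then integral/elliptic estimates for the Poisson kernel give uniform regularity, and a subsequence converges to a positive solution of \eqref{eq:problem}; we are done. Otherwise $v_\tau$ blows up, and here the blow-up procedure of \cite{JLX3} must be transplanted from the Riesz potential to the Poisson integral: up to a subsequence $v_\tau$ concentrates, by the antipodal symmetry it concentrates simultaneously at a point $\bar\xi$ and at $-\bar\xi$, after a conformal rescaling by a factor $\mu_\tau\to0$ the rescaled solution converges to a standard bubble, and the concentration point $\bar\xi$ is a critical point of $K$. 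Choosing $\delta=\delta(n,q)$ small forces, via an energy comparison with the competitor $v\equiv1$ (whose energy involves the universal constant $\dashint_{\pa B_1}|\xi-\xi_1|^q\,\ud s$), the level of a symmetric bubble pair at $\bar\xi$ to be close enough to the bubble level at a minimum of $K$ that $\bar\xi$ must be governed by the flatness hypothesis and that no clustering and no nontrivial residual (weak) limit occur; by evenness the bound $0\le K(\xi)-K(\xi_1)\le\delta|\xi-\xi_1|^q$ also holds around $-\xi_1$, so one may assume $\bar\xi\in\{\xi_1,-\xi_1\}$.

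Now apply a localized Kazdan--Warner identity. For a conformal Killing field $X$ of $\pa B_1$ one has \eqref{eq:KW}; localizing it in a fixed geodesic ball $B_r(\bar\xi)$ against $v_\tau$ and integrating by parts off $K$ produces an identity, schematically,
\[
\mathcal F_r[v_\tau]\;=\;-\int_{B_r(\bar\xi)}\bigl(K-K(\bar\xi)\bigr)\,\nabla_X\!\bigl(v_\tau^{\frac{2(n-1)}{n-2}}\bigr)\,\ud s\;+\;(\text{lower order}),
\]
where the flux term $\mathcal F_r[v_\tau]$, computed from the bubble profile on $\pa B_r(\bar\xi)$, satisfies $\mathcal F_r[v_\tau]=c_\ast\,\mu_\tau^{\,n-2}+o(\mu_\tau^{\,n-2})$ with a strictly positive constant $c_\ast=c_\ast(n)$, while the right-hand side, using $|K-K(\bar\xi)|\le\delta|\xi-\bar\xi|^q$, the bound $|\nabla v_\tau|\lesssim\mu_\tau^{-1}v_\tau$ at the bubble scale, and the decay of the bubble, is $O\bigl(\delta\,\mu_\tau^{\min(q-1,\,n-1)}\bigr)$ (with a harmless logarithm when $q=n$). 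Since $q>n-1$ gives $\min(q-1,n-1)>n-2$, dividing by $\mu_\tau^{\,n-2}$ and letting $\tau\to0$ yields $c_\ast\le0$, a contradiction. Hence blow-up cannot occur, and the first alternative produces the desired positive solution.

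\textbf{Main obstacle.}
The crux is the second step: adapting the blow-up analysis of \cite{JLX3} to the Poisson kernel on $B_1$ — classifying the bubbling profiles, excluding clustering so that exactly one antipodal bubble pair forms with trivial weak limit, pinning the concentration at a point where the flatness estimate is available, and in particular establishing the sharp $c_\ast\,\mu_\tau^{\,n-2}$ asymptotics of the localized flux $\mathcal F_r$ together with its positivity. The exponent condition $q>n-1$ is then forced exactly by the comparison $\mu_\tau^{\,q-1}$ versus $\mu_\tau^{\,n-2}$ in the Kazdan--Warner identity, which reflects that $\frac{2(n-1)}{n-2}$ is the critical Sobolev exponent attached to the $(n-1)$-dimensional boundary $\pa B_1$; secondarily, making $\delta$ small (only in terms of $n$ and $q$) is what confines the concentration to the flat minimum.
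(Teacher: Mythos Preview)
Your approach is genuinely different from the paper's, and it has a real gap.

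\textbf{What the paper does.} The paper splits the theorem into two propositions. First (Proposition~3.1), it shows by the blow-up analysis of Theorem~2.1 that if blow-up occurs along the subcritical maximizers $v_i$, then the limiting level satisfies
\[
\lda_{as}(K)\le \frac{S(n)^{\frac{2n}{n-2}}}{K(\bar\xi)^{\frac{n}{n-1}}2^{1/(n-1)}}\le \frac{S(n)^{\frac{2n}{n-2}}}{(\min K)^{\frac{n}{n-1}}2^{1/(n-1)}},
\]
so the strict inequality $\lda_{as}(K)>\frac{S(n)^{2n/(n-2)}}{(\min K)^{n/(n-1)}2^{1/(n-1)}}$ rules out blow-up and forces a maximizer to exist. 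Second (Proposition~3.2), it \emph{constructs a test function}: two truncated bubbles $v_{1,\beta},v_{2,\beta}$ supported on opposite hemispheres and centered at $\xi_1,-\xi_1$. On the boundary these do not interact, but their harmonic extensions do: near one pole the extension has the structure ``bubble plus a positive harmonic function of linear growth,'' which produces a strictly positive correction $A\lda^{n-1}$ in the numerator. The flatness hypothesis with $q>n-1$ bounds the denominator correction by $\delta C(n,q)\lda^{n-1}$, and choosing $\delta<\frac{(n-1)A}{nC(n,q)}$ gives the strict inequality, with $\delta$ depending only on $n,q$.

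\textbf{Where your argument breaks.} Your Pohozaev/localized Kazdan--Warner scheme needs the flatness bound $|K(\xi)-K(\bar\xi)|\le \delta|\xi-\bar\xi|^q$ at the \emph{actual} blow-up point $\bar\xi$, but the hypothesis only gives this at $\xi_1$. Your reduction ``$\bar\xi\in\{\xi_1,-\xi_1\}$'' via comparison with $v\equiv 1$ does not work: that comparison only yields $K(\bar\xi)$ close to $K(\xi_1)$ (and with a smallness threshold that depends on $K(\xi_1)$, not just on $n,q$), and in no way pins down $\bar\xi$; $K$ could have other critical points where no flatness is assumed. The paper's threshold argument sidesteps this entirely: once the test-function inequality at $\xi_1$ is established, the blow-up inequality at \emph{any} $\bar\xi$ already gives the contradiction, since $K(\bar\xi)\ge \min K=K(\xi_1)$.

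\textbf{On the positivity of $c_\ast$.} You correctly flag this as the crux, but note that the mechanism producing $c_\ast>0$ is precisely the interior interaction of the two antipodal bubbles through the Poisson extension---the same ``bubble plus $a\,y_n$'' structure the paper exploits directly in its test-function computation. So even if you could localize the blow-up to $\xi_1$, extracting the sign of $c_\ast$ would essentially reproduce the paper's calculation in a less transparent form. The test-function route is both shorter and what makes $\delta=\delta(n,q)$ genuinely independent of $K$.
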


The analogue of Theorem \ref{thm:A} for Nirenberg problem was established by Escobar-Schoen \cite{ES}. See Jin-Li-Xiong \cite{JLX3} and references therein for generalized Nirenberg problems.
We prove Theorem \ref{thm:A} via subcritical approximation approach, which contains two steps. The first shows that if the supremum of $I[\cdot]$ is greater than some threshold, then maximizers exist.
Here we use a blow up analysis argument for integral equations, which was introduced by Jin-Li-Xiong \cite{JLX3}.  Our current equation has a stronger nonlocal feature.
New ingredients, such as boundary Harnack inequality,  are incorporated in the proofs.

The second step verifies the strict inequality.
Again due to the strong nonlocality, we introduce a trial  function by gluing two bubbles along the equator of the sphere. This is different from the Nirenberg problem case; see \cite{JLX1, JLX3} and references therein.  These two bubbles do not affect each other in the boundary $L^{\frac{2(n-1)}{n-2}}$ norm, but they do in the interior thanks to the harmonic extension. In particular, in the interior our trial function will be of the ``\emph{a bubble plus a positive harmonic function of a linear growth}" structure locally.   Such structure  was used by the author \cite{X} to study boundary isolated singularity in a different context; see the proof of Proposition 4.2 in that paper. See also Jin-Xiong \cite{JX}.   In \cite{SX}, Sun-Xiong proved ``\emph{bubbles plus polynomials}" type classification theorems of higher order boundary conformally invariant problems.

In the future work, we will study existence and compactness of solutions beyond the antipodal symmetry functions class. We will also study the exponential nonlinearity problem of dimension two as the classical work Chang-Yang \cite{CY1, CY2} did.

At the end of this section, we note that the Poisson kernel on the upper half space (see section \ref{sec:blowup}) coincides with the heat kernel of $\pa_t+(-\Delta)^{1/2}$, see Blumenthal-Getoor \cite{BG}. Hence, our problem can also be interpreted  through the $1/2$ heat kernel. Within this in mind, one may draw an analogy to the studies of maximizers for the Strichartz inequality and Stein-Tomas inequality; see Foschi \cite{F}, Christ-Shao \cite{CS1,CS2}, Frank-Lieb-Sabin \cite{FLS} and references therein.

\bigskip

\noindent \textbf{Acknowledgments:} The author thanks Tianling Jin for valuable discussions.

\section{A blow up analysis procedure}
\label{sec:blowup}

We denote $x=(x',x_n)$, $y= (y',y_n)$ as points in $\R^n$,  $B_R(x)$ as the open ball of $\R^n$ centered as $x$ with radius $R$, and $B'_{R}(x')$ as the open ball in $\R^{n-1}$ centered as $x'$ with radius $R$.

Let  $F:\R^{n}_+\to B_1$ be the Mobius transformation given by
\[
F(x)= \frac{2(x+ e_n)}{|x+ e_n|^2} -e_n,
\]
where $e_n=(0,\dots, 0,1)$. For $x=(x',0)\in \pa \R^{n}_+$, we see that
\[
F(x)=  (\frac{2x'}{|x'|^2+1},\frac{1-|x'|^2}{|x'|^2+1})\in \pa B_1
\]
is the inverse of the stereographic projection. For $v\in L^{\frac{2(n-1)}{n-2}}(\pa B_1)$,  let
\be\label{eq:transform}
u(x')=(\frac{\sqrt{2}}{|x+e_n|})^{n-2} v(F(x)) \quad  \mbox{for }x=(x',0).
\ee
For saving notations, we still use $P(\cdot,\cdot)$ to denote the Poisson kernel on the upper half space and
\[
\mathcal{P}u(x)=\int_{\R^{n-1}}  P(y',x) u(y')\,\ud y' =\frac{2}{n\w_n} \int_{\R^{n-1}}  \frac{x_n}{(|x'-y'|^2+x_n^2)^{\frac{n}{2}}} u(y')\,\ud y'.
\]
It is easy to check that
\be\label{eq:transform-1}
\mathcal{P} u(x)= (\frac{\sqrt{2}}{|x+e_n|})^{n-2} (\mathcal{P}v)(F(x)) .
\ee
We will use the fact
\be  \label{eq:kernelsmooth}
|\nabla _{x'}^k P(y',x)|= |\nabla^k _{y'}  P(y',x)| \le C(k) x_n(|x'-y|^2+x_n^2)^{-\frac{n+k}{2}}
\ee
for $x'\neq y', k=1,\dots, $ to obtain regularity.

The main result of this section is the blowing up a bubble result as follows.

\begin{thm}\label{thm:bp procedure} Let $ \frac{n}{n-2}\le p_i<\frac{n+2}{n-2}$ be a sequence numbers with $\lim_{i\to \infty}p_i= \frac{n}{n-2}$, and $K_i\in C^1(B_1')$ be a sequence of  positive  functions satisfying
\[
K_i\ge \frac{1}{c_0}, \quad \|K_i\|_{C^1(B_1')}\le c_0
\] for some constant $c_0\ge 1$ independent of $i$. Suppose that $u_i\in C^0(\R^{n-1})$ is a sequence of nonnegative solutions of
\be\label{eq:2.1}
K_i(x') u_i(x')^{p_i}=\int_{\R^{n}_+} P(x',y)\mathcal{P}u_i(y)^{\frac{n+2}{n-2}}\,\ud y \quad \mbox{for }x'\in  B_1'
\ee
and $
u_i(0)\to \infty $ as $i\to \infty$. Suppose that  $R_i u_i(0)^{p_i-\frac{n+2}{n-2}}\to 0$ for some $R_i\to \infty$ and
\[
u_i(x')\le bu_i(0) \quad \mbox{for }|x'|<R_i u_i(0)^{p_i-\frac{n+2}{n-2}},
\] where $b>0$ is independent of $i$. Then, after passing to a subsequence, we have
\be\label{eq:convergence}
\phi_i(x'):=\frac{1}{u_i(0)} u_i(u_i(0)^{p_i-\frac{n+2}{n-2}}x') \to \phi(x') \quad \mbox{in }C_{loc}^{1/2}(\R^{n-1}),
\ee
where $\phi> 0$ satisfies
\be \label{eq:limit}
K \phi(x')^{\frac{n}{n-2}}=\int_{\R^{n}_+} P(x',y)\mathcal{P} \phi(y)^{\frac{n+2}{n-2}}\,\ud y \quad \mbox{for }x'\in  \R^{n-1}
\ee
and $K=\lim_{i\to \infty} K_i(0)$ along the subsequence.
\end{thm}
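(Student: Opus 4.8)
The plan is to rescale so that the blow‑up sequence becomes a bounded sequence of solutions of equations converging to \eqref{eq:limit}, extract uniform a priori bounds (the substance being the control of the strongly nonlocal right‑hand side), and pass to the limit. Put $\lambda_i:=u_i(0)^{p_i-\frac{n+2}{n-2}}$; since $p_i-\frac{n+2}{n-2}<0$ and $u_i(0)\to\infty$ we have $\lambda_i\to0$. Using the scaling identities $P(\lambda y',\lambda x)=\lambda^{1-n}P(y',x)$ and $\mathcal{P}(f(\lambda\,\cdot))(x)=(\mathcal{P}f)(\lambda x)$ together with $\lambda_iu_i(0)^{\frac{n+2}{n-2}}=u_i(0)^{p_i}$, a change of variables turns \eqref{eq:2.1} into
\be\label{eq:rescaled}
K_i(\lambda_i z')\,\phi_i(z')^{p_i}=\int_{\R^n_+}P(z',w)\,\mathcal{P}\phi_i(w)^{\frac{n+2}{n-2}}\,\ud w\qquad\text{for }|z'|<1/\lambda_i,
\ee
where $\phi_i(0)=1$, $0\le\phi_i\le b$ on $B'_{R_i}$ with $R_i\to\infty$, and (after passing to a subsequence) $K_i(\lambda_i\,\cdot)\to K:=\lim_iK_i(0)\in[1/c_0,c_0]$ in $C^0_{loc}(\R^{n-1})$; any locally uniform limit of the $\phi_i$ is then automatically $\le b$ on all of $\R^{n-1}$.

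The key estimate is a bound on $\mathcal{P}\phi_i$ read off from the \emph{global} identity $\int_{\R^n_+}P(0,w)\,\mathcal{P}\phi_i(w)^{\frac{n+2}{n-2}}\,\ud w=K_i(0)\le c_0$ (evaluate \eqref{eq:rescaled} at $z'=0$). Writing $\mathcal{P}\phi_i=\mathcal{P}(\phi_i\mathbf 1_{B'_{R_i}})+\mathcal{P}(\phi_i\mathbf 1_{\R^{n-1}\setminus B'_{R_i}})$, the first term is $\le b$ everywhere, while for $|w|\le\rho$ with $R_i>10\rho$ an elementary kernel estimate gives $c\,w_nM_i\le\mathcal{P}(\phi_i\mathbf 1_{\R^{n-1}\setminus B'_{R_i}})(w)\le C\,w_nM_i$, with $M_i:=\int_{|t'|>R_i}|t'|^{-n}\phi_i(t')\,\ud t'$. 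Inserting the lower bound into the $z'=0$ identity gives $M_i^{\frac{n+2}{n-2}}\int_{\{|w|\le\rho,\,w_n>0\}}|w|^{-n}w_n^{\frac{2n}{n-2}}\,\ud w\le C\,c_0$, and since the last integral grows like $\rho^{\frac{2n}{n-2}}$, letting $\rho=\rho_i\to\infty$ slowly (keeping $R_i>10\rho_i$) forces $M_i\to0$. Hence $\mathcal{P}\phi_i$ is uniformly bounded on compact subsets of $\overline{\R^n_+}$ and $\sup_i\int_{|t'|>R}|t'|^{-n}\phi_i(t')\,\ud t'\to0$ as $R\to\infty$; in particular $\mathcal{P}\phi_i\to\mathcal{P}\phi$ locally uniformly on $\overline{\R^n_+}$ once $\phi_i\to\phi$ in $C^0_{loc}$. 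For regularity, set $H_i(z'):=\int_{\R^n_+}P(z',w)\mathcal{P}\phi_i(w)^{\frac{n+2}{n-2}}\,\ud w=K_i(\lambda_iz')\phi_i(z')^{p_i}$, so $0\le H_i\le C$ on compacts; splitting the defining integral according to whether $|z'-w'|^2+w_n^2$ is small (a fixed compact set, where $\mathcal{P}\phi_i$ is bounded) or bounded below (where \eqref{eq:kernelsmooth} gives $|\nabla_{z'}P(z',w)|\le C\,P(z',w)$, integrable against $\mathcal{P}\phi_i^{\frac{n+2}{n-2}}\ud w$ by the weighted bound) shows $H_i$ is bounded in $C^{0,\gamma}_{loc}$ for every $\gamma<1$. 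Since $K_i(\lambda_i\,\cdot)$ is uniformly $C^1$ with $K_i\ge1/c_0$ and $t\mapsto t^{1/p_i}$ is Hölder of exponent $1/p_i\ge\frac{n-2}{n+2}$, the $\phi_i=(H_i/K_i(\lambda_i\,\cdot))^{1/p_i}$ are uniformly bounded in $C^{0,\beta}_{loc}$ for a fixed $\beta>0$; Arzelà–Ascoli yields a subsequence with $\phi_i\to\phi$ in $C^0_{loc}$, $\phi\ge0$, $\phi(0)=1$, $\phi\le b$.

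It remains to identify the limit equation. Fatou's lemma applied to \eqref{eq:rescaled} first shows $\phi$ is a supersolution, with deficit $D(z'):=K\phi(z')^{\frac{n}{n-2}}-\int_{\R^n_+}P(z',w)\mathcal{P}\phi(w)^{\frac{n+2}{n-2}}\,\ud w\ge0$. Next $D$ is constant: because $P(z',\cdot)-P(z'',\cdot)=O(w_n|w|^{-n-1})$ at infinity, negligible against $P(z'',\cdot)$, the difference $H_i(z')-H_i(z'')$ passes to the limit and yields $H_\infty(z')-H_\infty(z'')=\int_{\R^n_+}[P(z',w)-P(z'',w)]\mathcal{P}\phi(w)^{\frac{n+2}{n-2}}\,\ud w$ with $H_\infty:=K\phi^{\frac{n}{n-2}}$. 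Finally $D\equiv0$: were $D\equiv D_0>0$, then since $\int P(z',\cdot)\mathcal{P}\phi^{\frac{n+2}{n-2}}\to0$ as $|z'|\to\infty$ (dominated convergence, using $P(z',w)\le P(0,w)$ for $|z'|\ge2|w|$) we would get $\phi(z')\to(D_0/K)^{\frac{n-2}{n}}>0$, hence $\mathcal{P}\phi(w)\to(D_0/K)^{\frac{n-2}{n}}>0$ as $|w|\to\infty$, whence $\int_{\R^n_+}P(0,w)\mathcal{P}\phi(w)^{\frac{n+2}{n-2}}\,\ud w=\infty$, contradicting the Fatou bound $\le c_0$. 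Thus $\phi$ solves \eqref{eq:limit}, and $\phi>0$ since otherwise $\mathcal{P}\phi\equiv0$ and then $\phi\equiv0\neq\phi(0)$. Knowing $\phi>0$, on each $B'_\rho$ and $i$ large $H_i\ge c'(\rho)>0$, so $t\mapsto t^{1/p_i}$ is Lipschitz on the relevant range and the $\phi_i$ are uniformly bounded in $C^{0,\gamma}_{loc}$ for all $\gamma<1$; combined with $C^0_{loc}$ convergence this upgrades to $\phi_i\to\phi$ in $C^{1/2}_{loc}$.

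I expect the two hard steps to be: (i) the control of the nonlocal term—the only a priori input is the local bound $\phi_i\le b$, and it must be turned, via testing \eqref{eq:rescaled} at the blow‑up point and the splitting of $\mathcal{P}\phi_i$, into decay of the tail $M_i$ and into the uniform $C^{0,\gamma}$ bound on $H_i$ up to $\{w_n=0\}$ (where boundary Harnack–type estimates are convenient); and (ii) the no‑loss‑of‑mass argument, which is genuinely delicate for a nonlocal equation and is precisely what promotes the Fatou supersolution to an honest solution of \eqref{eq:limit}.
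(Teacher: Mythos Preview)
Your proposal is correct, and the overall architecture—rescale, bound $\mathcal{P}\phi_i$ locally, extract H\"older compactness, then identify the limit equation—matches the paper. The execution of the two ``hard steps'' is genuinely different, however. For the nonlocal tail, the paper splits $\mathcal{P}\phi_i=\Phi_i'+\Phi_i''$ at a \emph{fixed} radius $R$, bounds $\Phi_i''$ only by a constant via a mean-value argument and the boundary Harnack inequality, and must later invoke a Liouville theorem on $\R^n_+$ to rule out a linear term $ax_n$ in the limit of $\mathcal{P}\phi_i$; you instead split at the \emph{moving} radius $R_i$, feed the pointwise lower bound $\mathcal{P}\phi_i\ge c\,w_nM_i$ back into the equation at $z'=0$, and force $M_i\to0$ directly, which gives $\mathcal{P}\phi_i\to\mathcal{P}\phi$ for free and bypasses both the boundary Harnack and the Liouville steps. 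For the limit equation, the paper carries a truncation remainder $h_i(R,\cdot)$ through the limit and shows $\lim_{R\to\infty}h(R,\cdot)$ is a constant $c_1$ which must vanish; your Fatou supersolution plus ``$D$ is constant by differencing'' plus ``$D_0>0$ forces $\phi\to c>0$ at infinity, hence $\int P(0,\cdot)\mathcal{P}\phi^{\frac{n+2}{n-2}}=\infty$'' is an equivalent repackaging—the uniform-in-$i$ smallness of $\int_{|w|>R}[P(z',w)-P(z'',w)]\mathcal{P}\phi_i^{\frac{n+2}{n-2}}$ underlying your difference argument is exactly the comparability $h_i(R,x')\approx h_i(R,0)$ the paper uses. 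Finally, the paper secures a lower bound $\phi_i\ge c(R)>0$ \emph{before} passing to the limit (by feeding $\phi_i(0)=1$ back through the equation), whereas you first pass to a $C^{0,\beta}$ limit, deduce $\phi>0$, and then upgrade; both routes reach $C^{1/2}_{loc}$ convergence. Your approach is more elementary in that it avoids the Liouville theorem, at the cost of leaning on the explicit linear-in-$w_n$ behavior of the far-field Poisson kernel; the paper's boundary-Harnack route is slightly more robust but needs the extra classification step.
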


Solutions of \eqref{eq:limit} in $L^{\frac{2(n-1)}{n-2}}_{loc}(\R^{n-1})$ were classified in \cite{HWY}, which are $(1+|x|^2)^{-\frac{n-2}{2}}$ upon multiplying, translating and
scaling.

\begin{proof} Note that
\be\label{eq:scale}
H_i(x')\phi_i(x')^{p_i}=\int_{\R^{n}_+} P(x',y)\mathcal{P}\phi_i(y)^{\frac{n+2}{n-2}}\,\ud y \quad \mbox{for }|x'|<R_i,
\ee
where $H_i(x')=K_i(u_i(0)^{p_i-\frac{n+2}{n-2}}x') $. By the assumption, we have
\be\label{eq:bound}
0\le \phi_i(x') \le b \quad \mbox{for }|x'|<R_i.
\ee

\textbf{Step 1.} Estimates of $\phi_i$ and  convergence.

For any fixed $0<R<R_i/2$, define
\[
\Phi_i'= \mathcal{P}(\chi_{B'_{R}} \phi_i) \quad \mbox{and} \quad \Phi_i''= \mathcal{P}((1-\chi_{B'_{R}} ) \phi_i),
\]
where $\chi_\om$ is the characterization function of the set $\om$. Then $\mathcal{P} \phi_i=\Phi_i'+\Phi_i'' $.
Since the Poisson kernel is nonnegative, by \eqref{eq:bound} we have
\be
\label{eq:phi'}
 0\le \Phi_i'(y) \le  b.
\ee Since $K_i\le c_0$ and \eqref{eq:bound}, by \eqref{eq:scale}  we have for any $|x'|<R_i$,
\begin{align*}
c_0 b^{p_i}& \ge  \int_{B_{1/2}(x',e_n)} P(x',y) \mathcal{P} \phi_i(y)^{\frac{n+2}{n-2}}\,\ud y \\ &
\ge   \frac1C \int_{B_{1/2}(x',e_n)} \mathcal{P} \phi_i(y)^{\frac{n+2}{n-2}}\,\ud y  \ge
\frac{1}{C}\mathcal{P} \phi_i(\bar y) ^{\frac{n+2}{n-2}}
\end{align*}
for some $\bar y\in  B_{1/2}(x',e_n)$, where we used the  mean value theorem in the last inequality and $C>0$ depends only on $n$. It follows that
\be \label{eq:2.3}
\Phi_i''(\bar y)\le \mathcal{P} \phi_i(\bar y)\le C b^{\frac{p_i(n-2)}{n+2}}.
\ee  By the definition of $\Phi_i''(\bar y)$, we immediately see the boundary Harnack inequality
\be \label{eq:rest-1}
\frac{\Phi_i''(y)}{y_n}\le C\frac{\Phi_i''(\bar y)}{\bar y_n}  \quad \mbox{for } y\in B'_{1}(x')\times (0,2], ~|x'|<R-1,
\ee
where $C>0$ depends only on $n$.  Combining \eqref{eq:phi'}, \eqref{eq:2.3} and \eqref{eq:rest-1} together, we have
\[
\mathcal{P} \phi_i(y) \le C \quad \mbox{for every } y\in B'_{R-1}\times (0,1].
\]

Using the above estimate, by direct computations we have
\begin{align*}
&\| \int_{ B'_{R-1}\times (0,1]} P(\cdot,y)\mathcal{P} \phi_i (y)^{\frac{n+2}{n-2}}\,\ud y \|_{C^\al(B'_{R-2})}\\&
\le C  \int_{ B'_{R-1}\times (0,1]} (|x'-y'|^2+y_n^2)^{\frac{n-1+\al }{2}}\ud y  \le C(n,b,\al,R)
\end{align*}
for any $\al\in (0,1)$. On the other hand, for $|x'|<R-2$, by \eqref{eq:kernelsmooth} we have
\begin{align*}
|\nabla_{x'} (\int_{\R^{n}_+\setminus B'_{R-1}\times (0,1]} P(x',y)\mathcal{P} \phi_i(y)^{\frac{n+2}{n-2}}\,\ud y)|& \le C\int_{\R^{n}_+\setminus B'_{R-1}\times (0,1]} P(x',y)\mathcal{P} \phi_i(y)^{\frac{n+2}{n-2}}\,\ud y\\&
\le C (H_i(x') \phi_i(x'))^{p_i} \le C b^{p_i}.
\end{align*}
where $C>0$ depends only on $n$ and $c_0$.  Combining the above two estimates and  using  \eqref{eq:scale} we conclude that with $\al=3/4$  \be \label{eq:powerholder}
\|\phi_i^{p_i}\|_{C^{3/4}(B'_{R-1})} \le C(n,b,c_0,R).
\ee

Since $\phi_i(0)^{p_i}=1$, by \eqref{eq:powerholder} one can find  $\delta>0$, depending only on $n$, $b$ and $c_0$, such that $\phi_i(x')^{p_i}\ge 1/2$ for all $|x'|<\delta$. Hence,
\[
\mathcal{P}\phi_i(y)\ge y_n c(n)\int_{B'_\delta} \frac{1}{(|x'-y'|^2+y_n^2)^{n/2}} 2^{-1/p_i}\,\ud x' \ge \frac{1}{C(n,b,c_0)} \frac{y_n}{(1+|y|)^n}.
\]
for some $C(n,b)>0$. Inserting the above estimate into \eqref{eq:scale}, we see that for any $0<|x'|<R-1$
\[
\phi_i(x')^{p_i} \ge \frac{1}{C(n,b,c_0,R)}>0.
\]
It follows from \eqref{eq:powerholder} that
\be \label{eq:compactnessatboundary}
\|\phi_i\|_{C^{3/4}(B'_{R-2})} \le C(n,b,c_0,R).
\ee  Therefore,  \eqref{eq:convergence} follows.

\textbf{Step 2.} $\mathcal{P}\phi_i$ and the equation of $\phi_i$ convergence.

The difficulty arises because there is no information about the behavior of  $\phi_i$ in the complement of $B_{R_i}'$.  Here we adapt some idea from \cite{JLX3} by using monotonicity. For any $0<R<R_i/2$, we write equation \eqref{eq:scale} as
\be \label{eq:truncat-1}
H_i(x')\phi_i(x')^{p_i}=\int_{B_R^+} P(y,x')\mathcal{P}\phi_i(y)^{\frac{n+2}{n-2}}\,\ud y+h_i(R,x'),
\ee
where
\[
h_i(R,x') =\int_{\R^n_+\setminus B_R^+} P(y,x')\mathcal{P} \phi_i(y)^{\frac{n+2}{n-2}}\,\ud y.
\]
By \eqref{eq:kernelsmooth}, for any $|x'|<R-1$ we have $|\nabla h_i(R,x')|\le C h_i(R, x')\le Cb^{p_i}$ for some $C>0$ depending only on $n$ and $R$. Therefore, subject to subsequence $h_i(R,x')\to h(R,x')$ for some nonnegative function $h\in C^1(B_{R-1})$.

Similar as in step 1, we split $\mathcal{P}\phi_i$ as two parts $\Phi_i'$ and $\Phi_i''$ with $R$ replaced by $R+10$. By \eqref{eq:compactnessatboundary} with $R-2$ replaced by $R+8$ and elementary estimates of Poisson integral,  we have   $\|\Phi_i'\|_{C^{3/4}(B_R^+)}\le C(n,b,R)$. While
$\|\Phi_i''\|_{C^{3/4}(B_R^+)}\le C(n,b,R)$ follows from \eqref{eq:2.3}, \eqref{eq:rest-1} and interior estimates for harmonic functions. Therefore,  subject to a subsequence,
\[
\mathcal{P} \phi_i \to \tilde \Phi \quad \mbox{in }C^{1/2}_{loc}(\bar \R^n_+)
\]
for some $\tilde \Phi\ge 0$ satisfies
\[
-\Delta \tilde \Phi=0 \quad \mbox{in }\R^n_+ \quad \mbox{and} \quad \tilde \Phi=\phi \quad \mbox{on }\pa \R^n_+.
\]
Since $0\le \phi\le b$, $\mathcal{P}\phi$ is bounded in $\R^n_+$. Hence, $\tilde \Phi-\mathcal{P}\phi$ is harmonic function bounded from below in $\R^n_+$ and satisfies the homogenous Dirichlet boundary condition. It follows from the Liouville theorem on the half space, see, e.g., Sun-Xiong \cite{SX}, that
\be\label{eq:13}
\tilde \Phi(x)=\mathcal{P} \phi(x)+a x_n \quad \mbox{for some constant }a\ge 0.
\ee
Sending $i\to \infty$ in \eqref{eq:truncat-1}, we have
\be\label{eq:truncat-2}
K\phi(x')^{\frac{n}{n-2}}= \int_{B_R^+} P(y,x')\tilde \Phi(y)^{\frac{n+2}{n-2}}\,\ud y+h(R,x')
\ee
If $a>0$ in \eqref{eq:13}, sending $R\to \infty$ we see that
\[
K\phi(0)^{\frac{n}{n-2}}\ge \int_{B_R^+} P(y,0)\tilde \Phi(y)^{\frac{n+2}{n-2}}\,\ud y \to \infty.
\]
This is impossible. Hence, $a=0$ and $\tilde \Phi=\mathcal{P} \phi(x)$. By \eqref{eq:truncat-2}, $h(R,x')$ is decreasing with respect to $R$. Note that for $R>>|x'|$,
\begin{align*}
\frac{R^{n}}{(R+|x|)^{n}} h_i(R,0)&\le h_i(R, x') \\&
= \int_{\R^n_+\setminus B_R^+} \frac{|y|^n}{(|y'-x'|^2+y_n^2)^{\frac{n}{2}}} \frac{y_n}{|y|^n} \mathcal{P} \phi_i (y)^{\frac{n+2}{n-2}}\,\ud y\\&
\le \frac{R^{n}}{(R-|x|)^{n}} h_i(R,0).
\end{align*}
It follows that
\[
\lim_{R\to \infty} h(R,x')=\lim_{R\to \infty} h(R,0)=:c_1\ge 0.
\]
Sending $R$ to $\infty$ in \eqref{eq:truncat-2}, by Lebesgue's monotone convergence theorem we have
\[
K\phi(x')^{\frac{n}{n-2}}= \int_{\R^n_+} P(y,x') \mathcal{P} \phi (y)^{\frac{n+2}{n-2}}\,\ud y+c_1.
\]
If $c_1>0$, then $\phi\ge \frac{c_1}{c_0}>0$ and thus $\mathcal{P} \phi\ge \frac{c_1}{c_0}.$ This is impossible, otherwise the right hand side integration is infinity. Hence $c_1=0$.

Therefore, we complete the proof.

\end{proof}

\section{A variational problem}

Let $K\in C^1(\pa B_1)$ be a positive function satisfying $K(\xi)=K(-\xi)$, and $L^p_{as}(\pa B_1)\subset L^p(\pa B_1)$, $p\ge 1$,  be the set of antipodally symmetric functions. For $p\ge \frac{n}{n-2}$, define
\[
\lda_{as,p}(K)=\sup \left\{\int_{B_1} |\mathcal{P} v|^{\frac{2n}{n-2}}\,\ud \xi: v\in L^{p+1}_{as}(\pa B_1) \mbox{ with } \int_{\pa B_1} K|v|^{p+1}\,\ud s=1 \right \}.
\]
Denote $\lda_{as,\frac{n}{n-2}}=\lda_{as}$ for brevity.
\begin{prop}\label{prop:existence} If
\be\label{eq:strictineq}
\lda_{as}(K) >\frac{S(n)^{\frac{2n}{n-2}}}{(\min_{\pa B_1} K)^{\frac{n}{n-1}}2^{1/(n-1)}},
\ee
then $\lda_{as}(K)  $ is achieved.
\end{prop}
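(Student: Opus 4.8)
The plan is to prove Proposition~\ref{prop:existence} by a subcritical approximation, in the spirit of Escobar--Schoen \cite{ES} and Jin--Li--Xiong \cite{JLX3}, with Theorem~\ref{thm:bp procedure} as the engine; the antipodal symmetry is what turns the usual threshold $S(n)^{\frac{2n}{n-2}}(\min K)^{-\frac{n}{n-1}}$ into the smaller one in \eqref{eq:strictineq}. First, for each $p>\frac{n}{n-2}$ (equivalently $p+1>\frac{2(n-1)}{n-2}$) one shows $\lda_{as,p}(K)$ is attained. The key point is that $v\mapsto\mathcal{P}v$ is here a compact map into $L^{\frac{2n}{n-2}}(B_1)$: interpolating \eqref{eq:main-ineq} with the trivial bound $\|\mathcal{P}v\|_{L^\infty(B_1)}\le\|v\|_{L^\infty(\pa B_1)}$ gives $\mathcal{P}\colon L^{p+1}(\pa B_1)\to L^{r}(B_1)$ bounded for some $r>\frac{2n}{n-2}$, and since $\mathcal{P}v_k\to\mathcal{P}v$ pointwise in $B_1$ whenever $v_k\rightharpoonup v$ in $L^{p+1}(\pa B_1)$, Vitali's lemma yields $\mathcal{P}v_k\to\mathcal{P}v$ in $L^{\frac{2n}{n-2}}(B_1)$. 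A maximizing sequence (which may be taken nonnegative, using $|\mathcal{P}v|\le\mathcal{P}|v|$, and antipodally symmetric) thus has a weak limit $v_p\ge0$ attaining $\lda_{as,p}(K)>0$, with $\int_{\pa B_1}Kv_p^{p+1}=1$ after rescaling. Writing the Euler--Lagrange equation against antipodally symmetric test functions and noting that $\mathcal{P}v_p$ and $K$ are themselves antipodally even, it in fact holds on all of $\pa B_1$; evaluating the Lagrange multiplier by pairing with $v_p$ gives
\be\label{eq:ELsub}
\lda_{as,p}(K)\,K\,v_p^{\,p}=\int_{B_1}P(\cdot,\xi)\,\mathcal{P}v_p(\xi)^{\frac{n+2}{n-2}}\,\ud\xi,\qquad v_p>0\ \text{on }\pa B_1,
\ee
with $v_p$ positive and continuous by standard regularity for such integral equations (cf. \cite{JLX3} and Section~\ref{sec:blowup}).

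Next, by H\"older's inequality on the compact manifold $\pa B_1$ together with $\int_{\pa B_1}Kv_p^{p+1}=1$, the numbers $\lda_{as,p}(K)$ stay bounded as $p\downarrow\frac{n}{n-2}$, while testing $\lda_{as}(K)$ against bounded functions shows $\liminf_{p\downarrow n/(n-2)}\lda_{as,p}(K)\ge\lda_{as}(K)$. Fix $p_i\downarrow\frac{n}{n-2}$ in $(\frac{n}{n-2},\frac{n+2}{n-2})$, put $v_i=v_{p_i}$, and pass to a subsequence with $\lda_{as,p_i}(K)\to\Lda_0\in[\lda_{as}(K),\infty)$. If $\sup_i\|v_i\|_{L^\infty(\pa B_1)}<\infty$, then the coefficient $\lda_{as,p_i}(K)K$ of \eqref{eq:ELsub} is uniformly bounded above and below, so the estimates in the proof of Theorem~\ref{thm:bp procedure} used directly on $\pa B_1$ (no rescaling: the boundary Harnack inequality controls $\mathcal{P}v_i$, hence $v_i^{p_i}$ in $C^{3/4}$, hence $v_i$ uniformly) give a uniform modulus of continuity; along a further subsequence $v_i\to v\ge0$ in $C^0(\pa B_1)$, and passing to the limit in \eqref{eq:ELsub} shows $v$ solves \eqref{eq:problem} with $K$ replaced by $\Lda_0K$, $\int_{\pa B_1}Kv^{\frac{2(n-1)}{n-2}}=1$ (hence $v\not\equiv0$ and so $v>0$), and $\int_{B_1}|\mathcal{P}v|^{\frac{2n}{n-2}}=\Lda_0$. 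Since $v$ is admissible for $\lda_{as}(K)$ this forces $\Lda_0\le\lda_{as}(K)$, whence $\Lda_0=\lda_{as}(K)$ and $v$ achieves $\lda_{as}(K)$.

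It remains to rule out $M_i:=\max_{\pa B_1}v_i\to\infty$. Suppose it occurs, the maximum being attained at $\xi_i\to\xi_0$; by antipodal symmetry $v_i(-\xi_i)=M_i$ as well, and $-\xi_0\ne\xi_0$ because the antipodal map is fixed-point free on $\pa B_1$. In a M\"obius chart as in \eqref{eq:transform} centered at $\xi_i$, equation \eqref{eq:ELsub} becomes \eqref{eq:2.1} with $K_i$ uniformly bounded in $C^1$ from above and below; since $\xi_i$ is a global maximum and the conformal factor in \eqref{eq:transform} is comparable to $1$ on bounded sets, the hypotheses of Theorem~\ref{thm:bp procedure} hold (with $R_i\to\infty$ chosen slowly), and it produces $\phi_i\to\phi>0$ with $\phi$ solving \eqref{eq:limit} with constant $\Lda_0K(\xi_0)$. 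By the classification quoted after Theorem~\ref{thm:bp procedure}, $\phi$ is a standard bubble up to scaling and multiplication, hence an extremal of \eqref{eq:main-ineq} transplanted to $\R^{n-1}$; pairing \eqref{eq:limit} with $\phi$ and using Fubini and this sharpness gives, with $m:=\int_{\R^{n-1}}\phi^{\frac{2(n-1)}{n-2}}$,
\[
\Lda_0K(\xi_0)\,m=\int_{\R^n_+}(\mathcal{P}\phi)^{\frac{2n}{n-2}}=S(n)^{\frac{2n}{n-2}}m^{\frac{n}{n-1}},\qquad\text{so}\qquad\Lda_0K(\xi_0)=S(n)^{\frac{2n}{n-2}}\,m^{\frac{1}{n-1}}.
\]
Transporting $\phi_i\to\phi$ back to $\pa B_1$ and applying Fatou's lemma on the two disjoint spherical caps around $\xi_i$ and $-\xi_i$ --- the conformal change of variables together with $p_i+1>\frac{2(n-1)}{n-2}$ and $M_i\to\infty$ makes the rescaling Jacobian $\ge1$ on the relevant scale, so the inequality runs the favorable way --- yields $1=\int_{\pa B_1}Kv_i^{p_i+1}\,\ud s\ge 2K(\xi_0)\,m-o(1)$. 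Combining, $2\,\Lda_0^{\,n-1}K(\xi_0)^{n}\le\big(S(n)^{\frac{2n}{n-2}}\big)^{n-1}$, i.e.
\[
\Lda_0\ \le\ \frac{S(n)^{\frac{2n}{n-2}}}{2^{1/(n-1)}\,(\min_{\pa B_1}K)^{\frac{n}{n-1}}},
\]
contradicting $\Lda_0\ge\lda_{as}(K)$ and \eqref{eq:strictineq}. Hence blow-up is impossible, and $\lda_{as}(K)$ is achieved.

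The regularity and compactness in the bounded case are routine once the boundary Harnack inequality of Section~\ref{sec:blowup} is available, and any extra concentration points beyond the pair $\pm\xi_0$ are harmless since only these two are used and the constraint is the fixed number $1$. The main obstacle is the last step: verifying the structural hypothesis $u_i(x')\le b\,u_i(0)$ of Theorem~\ref{thm:bp procedure} at the maximum point (here immediate because $\xi_i$ is a \emph{global} maximum), and --- more delicately --- the conformal bookkeeping that transfers the boundary $L^{p_i+1}$-mass of the two antipodal bubbles back to $\pa B_1$. What makes this tractable is that only a one-sided (Fatou) lower bound on each bubble's mass is needed --- not a full no-energy-loss quantization --- and the antipodal symmetry supplies exactly the factor of $2$ that matches the threshold in \eqref{eq:strictineq}.
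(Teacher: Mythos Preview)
Your proof is correct and follows essentially the same approach as the paper: subcritical approximation, existence of subcritical maximizers, Euler--Lagrange equation, and a dichotomy ending in a blow-up analysis via Theorem~\ref{thm:bp procedure} that produces the factor $2$ from antipodal symmetry and contradicts \eqref{eq:strictineq}. The only cosmetic differences are that the paper cites \cite{HWY} for the subcritical compactness while you prove it by interpolation and Vitali, and the paper phrases the dichotomy as ``weak limit $v\equiv 0$ versus $v>0$'' rather than your equivalent ``$\sup_i\|v_i\|_{L^\infty}<\infty$ versus $M_i\to\infty$''; the endgame inequalities are identical.
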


\begin{proof} We claim that $\liminf_{p\searrow \frac{n}{n-2}} \lda_{as,p}(K)\ge \lda_{as}(K).$

Indeed, for any $\va>0$, by the definition of $\lda_{as}(K)$  one can find a function $v\in L_{as}^{\infty}(\pa B_1)$ such that
\[
\int_{B_1}|\mathcal{P} v |^{\frac{2n}{n-2}}\,\ud \xi>\lda_{as}(K)-\va \quad \mbox{and}\quad \int_{\pa B_1} K |v|^{\frac{2(n-1)}{n-2}}\,\ud s=1.
\]
Let $V_p:=\int_{\pa B_1} K |v|^{p+1}\,\ud s$. Since $\lim_{p\to\frac{n}{n-2} } V_p=\int_{\pa B_1} K |v|^{\frac{2(n-1)}{n-2}}\,\ud s=1$, we have, for $p$ close to $\frac{n}{n-2}$,
\[
\lda_{as,p}(K)\ge \int_{B_1}|\mathcal{P}(\frac{v}{V_p^{1/(p+1)}})|^{\frac{2n}{n-2}}\,\ud \xi\ge \lda_{as}(K)-2\va.
\]
By the arbitrary choice of $\va$, the claim follows.

By the above claim, one can seek $p_i \searrow  \frac{n}{n-2}$ as $i\to\infty$ such that $\lda_{as, p_i}(K) \to \lda\ge \lda_{as}(K)$. Since $K\in C^1(\pa B_1)$ and $K$ is positive, it follows from the compact embedding result Corollary 2.2 of \cite{HWY} that for $p_i>\frac{n}{n-2}$,  $\lda_{as,p_i}(K)$ is achieved, say, by $v_i$. Since $|\mathcal{P}v_i|\le \mathcal{P}|v_i| $, we may assume $v_i$ is nonnegative. Noticing that $\|v_i\|_{L^{p_i+1}(\pa B_1)}^{p_i+1}\le 1/\min_{\pa B_1} K$, by \eqref{eq:main-ineq} we have $\|\mathcal{P}v_i\|_{L^{\frac{2n}{n-2}}( B_1)}\le C$ for some $C$ independent of $i$.  It is easy to see that $v_i$ satisfies the Euler-Lagrange equation
\be \label{eq:22}
\lda_{as,p_i}(K) K(\xi)v_i(\xi)^{p_i}=  \int_{B_1} P(\xi,\eta)\mathcal{P}v_i(\eta)^{\frac{n+2}{n-2}}\,\ud \eta  \quad\forall~ \xi\in \pa B_1.
\ee
Hence, subject to a subsequence,
\begin{align*}
v_i \rightharpoonup v& \quad \mbox{weakly in } L^{\frac{2(n-1)}{n-2}}(\pa B_1)\\
\mathcal{P}v_i \rightharpoonup V& \quad \mbox{weakly in } L^{\frac{2n}{n-2}}(B_1)
\end{align*}
for some nonnegative function  $v\in L^{\frac{2(n-1)}{n-2}}(\pa B_1)$ and $V\in L^{\frac{2n}{n-2}}(B_1)$. By the compact embedding again, $V= \mathcal{P}v$. Hence, $v$ satisfies
\be
\lda K(\xi) v(\xi)^{\frac{n}{n-2}}= \int_{B_1} P(\xi,\eta)\mathcal{P}v(\eta)^{\frac{n+2}{n-2}}\,\ud \eta.
\ee
It follows that either $v\equiv 0$ or $v>0$. If the later happens, then $\lda=\lda_{as}(K)$ and we are done. Suppose now $v\equiv 0$.

By Proposition 5.2 of \cite{HWY} and equation  \eqref{eq:22}, $v_i\in C(\pa B_1)$. By standard arguments (see the proof of Theorem \ref{thm:bp procedure}),  we have $v_i\in C^{\al}(\pa B_1)$ for any $0<\al<1$. Since $v=0$,  we must have
$v_i(\xi_i)=\max_{\pa B_1} v_i \to \infty$ as $i\to \infty$. We may assume $\xi_i\to \bar \xi$ because $\pa B_1$ is compact. By stereographic projection with $\xi_i$ as the south pole, equation \eqref{eq:22} is transformed into
 \be
\lda_{as,p_i}(K) K_i(x') u_i(x')^{p_i}= \int_{\R^n_+} P(x',y)\mathcal{P}u_i(y)^{\frac{n+2}{n-2}}\,\ud y \quad \forall~x'\in \R^{n-1},
\ee
where $K_i(x')=K(F(x')) (\frac{2}{|x'|^2+1})^{((n-2)p_i-n)/2}$ and $u_i(x')= (\frac{2}{|x'|^2+1})^{\frac{n-2}{2}} v_i(F(x'))$. Hence $u_i(0)=\max_{\R^{n-1}} u_i \to \infty$ as $i\to \infty$. By Theorem \ref{thm:bp procedure}, we have, subject to a subsequence,
\[
\phi_i =\frac{1}{u_i(0)} u_i(u_i(0)^{p_i-\frac{n+2}{n-2}}x') \to \phi(x') \quad \mbox{in }C_{loc}^{1/2}(\R^{n-1})
\]
for some $\phi\ge 0$ satisfying
\be \label{eq:lim}
\lda K(\bar \xi) \phi (x')^{\frac{n}{n-2}}= \int_{\R^{n}_+} P(x',y) \mathcal{P}\phi(y)^{\frac{n+2}{n-2}}\,\ud y.
\ee
By \cite{HWY}, $\phi$ is classified. Since $v_i $ is nonnegative and antipodally symmetric, for any small $\delta>0$ we have
\begin{align*}
1=\int_{\pa B_1} K v_i^{p_i+1}\,\ud s &\ge 2 \int_{F(B_{\delta}')} K v_i^{p_i+1}\,\ud s
=2  \int_{ B_{\delta}'} K_i u_i^{p_i+1}\,\ud x' \\&
=2 \int_{ B_{\delta u_i(0)^{\frac{n+2}{n-2}-p_i} }'} K_i(u_i(0)^{p_i-\frac{n+2}{n-2}} y') \phi_i(y')^{p_i+1}\,\ud y'\\&
\ge 2 \int_{ B_{R}'} K_i(u_i(0)^{p_i-\frac{n+2}{n-2}} y') \phi_i(y')^{p_i+1}\,\ud y' \to 2 K(\bar \xi) \int_{ B_{R}'}  \phi (y')^{\frac{2(n-1)}{n-2}}\,\ud y'
\end{align*}
as $i \to \infty $ for any fixed $R>0$.
It follows that
\be \label{eq:energybound}
1\ge 2 K(\bar \xi) \int_{\R^{n-1}}  \phi (y')^{\frac{2(n-1)}{n-2}}\,\ud y'.
\ee
Hence, it follows from \eqref{eq:main-ineq}, \eqref{eq:lim} and \eqref{eq:energybound} that
\begin{align*}
S(n)^{\frac{2n}{n-2}} &\ge \frac{\int_{\R^n_+}|\mathcal{P}\phi|^{\frac{2n}{n-2}}}{ (\int_{\R^{n-1}} |\phi|^{\frac{2(n-1)}{n-2}})^{\frac{n}{n-1}}}\\&
= \lda K(\bar \xi)  (\int_{\R^{n-1}} |\phi|^{\frac{2(n-1)}{n-2}})^{-\frac{1}{n-1}}
\ge \lda K(\bar \xi)^{\frac{n}{n-1}} 2^{\frac{1}{n-1}}.
\end{align*}
This yields
\[
\lda \le \frac{S(n)^{\frac{2n}{n-2}}}{(\min K)^{\frac{n}{n-1}} 2^{\frac{1}{n-1}}},
\]
which contradicts the assumption \eqref{eq:strictineq}. We complete the proof.

\end{proof}

\begin{prop} \label{prop:strict} Let $n\ge 3$ and $K\in C^1(\pa B_1)$ be a positive function satisfying $K(\xi)=K(-\xi)$.  For every $q>n-1$, there exists a constant $\delta>0$, depending only on $n$ and $q$, such that if for  a minimal point $\xi_1$ of $K$ there holds $K(\xi)-K(\xi_1)\le \delta |\xi-\xi_1|^q$ for all $\xi\in \pa B_1$,  then \eqref{eq:strictineq} is valid.

\end{prop}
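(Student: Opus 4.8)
The plan is to produce a single antipodally symmetric competitor $v_\lambda$ with $I[v_\lambda]$ strictly above the right-hand side $T$ of \eqref{eq:strictineq}. Since both sides of \eqref{eq:strictineq} transform the same way under $K\mapsto cK$, $c>0$, I would first normalize $\min_{\pa B_1}K=K(\xi_1)=1$, so that the constant $\delta$ produced will depend only on $n$ and $q$. After a rotation assume $\xi_1=F(0)$ in the coordinates of Section \ref{sec:blowup}; then the antipodal involution of $\pa B_1$ becomes $\sigma(x')=-x'/|x'|^2$ on $\R^{n-1}$, the equator (perpendicular bisector of $\pm\xi_1$) becomes $\{|x'|=1\}$, and $|F(x')-\xi_1|\le 2|x'|$. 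With $b_\lambda(x')=(\lambda/(\lambda^2+|x'|^2))^{(n-2)/2}$, $\lambda\in(0,1)$, I define $v_\lambda$ on $\pa B_1$ through the transform \eqref{eq:transform} by prescribing its pullback $U_\lambda$ to equal $b_\lambda$ on $\{|x'|\le 1\}$ and $|x'|^{-(n-2)}b_\lambda(x'/|x'|^2)$ on $\{|x'|\ge 1\}$. The two expressions agree on $\{|x'|=1\}$ and $U_\lambda$ is invariant under the weighted action of $\sigma$, so $v_\lambda\in L^{2(n-1)/(n-2)}_{as}(\pa B_1)$ is continuous, positive and bounded; it is the ``two bubbles glued along the equator'' trial function — a bubble concentrating at $\xi_1$ on one hemisphere and its antipodal copy (concentrating at $-\xi_1$) on the other.

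For the denominator I would pull back and use both the symmetry of $v_\lambda$ and $K(-\xi)=K(\xi)$ to reduce the integral to twice an integral over $\{|x'|<1\}$, obtaining
\be\label{eq:planbdry}
\int_{\pa B_1}K|v_\lambda|^{\frac{2(n-1)}{n-2}}\,\ud s=2A-\frac{2|\mathbb{S}^{n-2}|}{n-1}\lambda^{n-1}+E_K(\lambda)+o(\lambda^{n-1}),
\ee
where $A=\int_{\R^{n-1}}(1+|z'|^2)^{-(n-1)}\,\ud z'$, the explicit term records the bubble mass cut off at the equator, and $E_K(\lambda)=2\int_{|x'|<1}(K(F(x'))-K(\xi_1))b_\lambda^{2(n-1)/(n-2)}\,\ud x'\ge 0$. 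The flatness hypothesis $K(\xi)-K(\xi_1)\le\delta|\xi-\xi_1|^q$ together with $|F(x')-\xi_1|\le 2|x'|$ then gives $0\le E_K(\lambda)\le C(n,q)\,\delta\,\lambda^{n-1}$; this is exactly where the threshold $q>n-1$ is used, because $\int_{|x'|<1}|x'|^q b_\lambda^{2(n-1)/(n-2)}\,\ud x'$ is $O(\lambda^{n-1})$ (rather than of a strictly lower order in $\lambda$) precisely when $q>n-1$.

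For the numerator I would write $\mathcal{P}v_\lambda=V_1+V_2$ in the half-space model, with $V_1=\mathcal{P}(U_\lambda\chi_{\{|x'|<1\}})$ and $V_2=\mathcal{P}(U_\lambda\chi_{\{|x'|>1\}})$. The Kelvin covariance of the Poisson extension under $\sigma$ gives $\int_{\R^n_+}V_2^{2n/(n-2)}=\int_{\R^n_+}V_1^{2n/(n-2)}$, so the interaction of the two bubbles in the interior is carried entirely by the cross term of $(V_1+V_2)^{2n/(n-2)}$ (the boundary $L^{2(n-1)/(n-2)}$ norm sees no such interaction, since there the two pieces have disjoint supports). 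Near $\xi_1$, $V_1$ is the full bubble minus the harmonic extension of its own cut-off tail, while $V_2$ vanishes on $\pa\R^n_+$ near $\xi_1$ and is therefore, to leading order, a positive multiple of size $\sim\lambda^{(n-2)/2}$ of the harmonic function $x_n$ — the ``bubble plus a positive harmonic function of linear growth'' picture of \cite{X,JX,SX}. Using the equality case of \eqref{eq:main-ineq} for the full bubble, the kernel estimates \eqref{eq:kernelsmooth}, boundary Harnack and interior estimates for harmonic functions to bound all remainders, one gets
\be\label{eq:planint}
\int_{B_1}|\mathcal{P}v_\lambda|^{\frac{2n}{n-2}}\,\ud\xi=2\,S(n)^{\frac{2n}{n-2}}A^{\frac{n}{n-1}}+\alpha(n)\,\lambda^{n-1}+o(\lambda^{n-1}),
\ee
where $-\tfrac12\alpha(n)\lambda^{n-1}$ is the truncation deficit of each $\int V_i^{2n/(n-2)}$ and the positive cross term $\tfrac{2n}{n-2}\int V_1^{(n+2)/(n-2)}V_2$ supplies the rest; the computation shows the cross term outweighs the truncation deficit, so that $\alpha(n)\ge 0$.

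Dividing \eqref{eq:planint} by the $n/(n-1)$-th power of \eqref{eq:planbdry} and expanding, $I[v_\lambda]=T\bigl(1+\Gamma(n)\lambda^{n-1}-c(n)E_K(\lambda)+o(\lambda^{n-1})\bigr)$ with $c(n)=\tfrac{n}{2(n-1)A}>0$ and
\[
\Gamma(n)=\frac{\alpha(n)}{2\,S(n)^{\frac{2n}{n-2}}A^{\frac{n}{n-1}}}+\frac{n\,|\mathbb{S}^{n-2}|}{(n-1)^2A}.
\]
Both summands are $K$-independent and, since $\alpha(n)\ge 0$, $\Gamma(n)>0$. Choosing $\delta=\delta(n,q)>0$ so small that $c(n)C(n,q)\,\delta<\tfrac12\Gamma(n)$, we get $\Gamma(n)\lambda^{n-1}-c(n)E_K(\lambda)\ge\tfrac12\Gamma(n)\lambda^{n-1}$, hence $I[v_\lambda]>T$ for all sufficiently small $\lambda$, which is \eqref{eq:strictineq}. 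The step I expect to be the genuine obstacle is the interior expansion \eqref{eq:planint}: one must control, uniformly in $\lambda$, every remainder in the expansion of $\int(V_1+V_2)^{2n/(n-2)}$ — several of the intermediate integrals are only borderline convergent for large $n$, so the cut-offs and the decay of $\mathcal{P}b_1$ must be handled with care — and pin down the leading coefficient, which is precisely where the strongly nonlocal ``bubble plus positive linear harmonic function'' structure is needed; by contrast the boundary estimate \eqref{eq:planbdry} and the final combination are routine.
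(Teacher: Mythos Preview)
Your test function is exactly the paper's: in the paper's notation $v_\beta=v_{1,\beta}+v_{2,\beta}$, and under the stereographic projection this is your $U_\lambda$ (up to the factor $2^{(n-2)/2}$). The boundary estimate and the final combination are also the same in spirit. The one substantive difference is the decomposition you use for the interior integral, and it is precisely this choice that manufactures the difficulty you flag as ``the genuine obstacle''.

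You split $\mathcal{P}u_\lambda=V_1+V_2$ according to the supports of the two truncated half-bubbles. Then $\int V_1^{2n/(n-2)}$ carries a truncation deficit of order $\lambda^{n-1}$ relative to the full bubble, and you must show the cross term beats it to get $\alpha(n)\ge 0$; you assert this but do not prove it. The paper avoids this comparison entirely by writing instead
\[
u_\lambda=w_{1,\lambda}+w_{2,\lambda},\qquad w_{1,\lambda}(y')=2^{\frac{n-2}{2}}\Bigl(\tfrac{\lambda}{\lambda^2+|y'|^2}\Bigr)^{\frac{n-2}{2}}\ \text{on all of }\R^{n-1},
\]
with $w_{2,\lambda}=u_\lambda-w_{1,\lambda}\ge 0$ supported in $\{|y'|\ge 1\}$ (the inequality $1+\lambda^2|y'|^2\le\lambda^2+|y'|^2$ there gives $w_{2,\lambda}\ge 0$). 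Then $W_{1,\lambda}:=\mathcal{P}w_{1,\lambda}$ is the explicit extended bubble with exact mass $\int_{\R^n_+}W_{1,\lambda}^{2n/(n-2)}=\omega_n$, so there is \emph{no} deficit, and since $W_{2,\lambda}\ge 0$ one has the one--sided bound
\[
\int U_\lambda^{\frac{2n}{n-2}}\ge \int W_{1,\lambda}^{\frac{2n}{n-2}}+\tfrac{2n}{n-2}\int W_{1,\lambda}^{\frac{n+2}{n-2}}W_{2,\lambda}.
\]
A direct Poisson-kernel computation gives $W_{2,\lambda}(y)\ge C^{-1}\lambda^{(n-2)/2}y_n$ on $B_{1/2}^+$, so the cross term is $\ge A\lambda^{n-1}$ with $A>0$; this is exactly your ``bubble plus positive linear harmonic function'' picture, but used only as a lower bound. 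Coupled with the crude upper bound $\int_{\pa B_1}Kv_\beta^{2(n-1)/(n-2)}\le 2K(\xi_1)(n\omega_n+\delta C(n,q)\lambda^{n-1})$ (the paper does not even track your boundary truncation gain), choosing $\delta$ so that $\tfrac{n}{n-1}\delta C(n,q)<A$ finishes the proof. In short: regroup $V_1+V_2$ as (full bubble)$+$(nonnegative remainder) rather than (half)$+$(half), and the obstacle disappears.
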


\begin{proof}
Let $\xi_1\in \pa B_1$ be a minimum point of $K$ and $\xi_2=-\xi_1$. Without loss of generality, we may assume $\xi_1$ is the south pole. For $\beta>1$ and $i=1,2$, let
\be
v_{i,\beta}(\xi)=\begin{cases} \left(\frac{\sqrt{\beta^2-1}}{\beta-\cos r_i}\right)^{\frac{n-2}{2}}& \quad \mbox{if }r_i\le \frac{\pi}{2}\\
0& \quad \mbox{if }r_i> \frac{\pi}{2},
 \end{cases}
\ee
and
\[
v_\beta=v_{1,\beta}+v_{2,\beta},
\]
where $r_i=d(\xi,\xi_i)$ is the geodesic distance between $\xi$ and $\xi_i$ on the sphere. Let
\[
u_{i,\lda}(y')= (\frac{2}{1+|y'|^2})^{\frac{n-2}{2}} v_{i,\beta}(F(y')),
\]
where $F(y')$ is the inverse of stereographic projection and $\lda=\sqrt{\frac{\beta-1}{\beta+1}}$, and $u_\lda=u_{1,\lda}+u_{2,\lda}$. By direct computations, we have \[
u_{1,\lda}(y')=2^{\frac{n-2}{2}}\left(\frac{\lda}{\lda^2+|y'|^2}\right)^{\frac{n-2}{2}}\chi_{\{|y'|\le 1\}}=:w_{1,\lda}(y)\chi_{\{|y'|\le 1\}}
\]
\[
u_{2,\lda}(y')=2^{\frac{n-2}{2}}\left(\frac{\lda}{1+\lda^{2}|y'|^2}\right)^{\frac{n-2}{2}}\chi_{\{|y'|\ge 1\}}.
\]
Hence,
\[
u_{\lda}= w_{1,\lda}+w_{2,\lda},
\]
where \[
w_{2,\lda}(y')= 2^{\frac{n-2}{2}}\left(\left(\frac{\lda}{1+\lda^{2}|y'|^2}\right)^{\frac{n-2}{2}}- \left(\frac{\lda}{\lda^2+|y'|^2}\right)^{\frac{n-2}{2}}\right) \chi_{\{|y'|\ge 1\}}.
\]
Since $1+\lda^{2}|y'|^2- (\lda^2+|y'|^2)=(|y|^2-1)(\lda^2-1)<0 $ if $|y'|\ge 1$ and $\lda<1$, we have $w_{2,\lda}(y')\ge0$.
Let $U_{\lda}=\mathcal{P}u_{\lda} =:W_{1,\lda}+W_{2, \lda}$. We have
\[
W_{1,\lda}(y)=2^{\frac{n-2}{2}}\left(\frac{\lda}{(y_n+\lda)^2+|y'|^2}\right)^{\frac{n-2}{2}}.
\]
For $|y|\le \frac{1}{2}$, we have
\begin{align*}
W_{2, \lda}(y)&= \frac{1}{n\w_n} \int_{\R^{n-1}\setminus B_1} \frac{y_n}{(|x'-y'|^2+y_n^2)^\frac{n}{2}} w_{2,\lda}(x')\,\ud x'\\& \ge \frac{1}{C} y_n \int_{\R^{n-1}\setminus B_1} |x'|^{-n} w_{2,\lda}(x')\,\ud x' \\&  \ge \frac{1}{C}\lda^{\frac{n-2}{2}} y_n
\end{align*}
for some $C>0$ independent of $\lda$.

By the conformal invariance, antipodal symmetry and the fact
\[
\mathcal{P} v_\beta(\xi)\le C\lda^{-\frac{n-2}{2}} dist(\xi,\{\xi_1,\xi_2\})^{2-n},
\] we have
\begin{align}
&\int_{B_1} |\mathcal{P}v_\beta|^{\frac{2n}{n-2}}\,\ud \xi= \int_{\R^n_+} U_\lda^{\frac{2n}{n-2}}\,\ud y\nonumber \\&
=2 \int_{B_{1/2}^+} U_\lda^{\frac{2n}{n-2}}\,\ud y + O(\lda^{n})\nonumber \\&
=2 \int_{B_{1/2}^+} W_{1,\lda}^{\frac{2n}{n-2}}+\frac{2n}{n-2}W_{1,\lda}^{\frac{n+2}{n-2}} W_{2,\lda}\,\ud y + O(\lda^{n})
\nonumber  \\& \ge 2 ( \w_n + \frac{2^{\frac{n+4}{2}}n}{n-2}\frac{\lda^{n-1}}{C} \int_{B_{1/2\lda}^+}(\frac{1}{(z_n+1)^2+|z'|^2})^{\frac{n+2}{2}}z_n\,\ud z+O(\lda^{n}))\nonumber  \\&
=2 ( \w_n +\frac{2^{\frac{n+4}{2}}n}{n-2}\frac{\lda^{n-1}}{C} \int_{\R^n_+}(\frac{1}{(z_n+1)^2+|z'|^2})^{\frac{n+2}{2}}z_n\,\ud z+O(\lda^{n})) \nonumber \\&
=:2 ( \w_n +
A\lda^{n-1}+O(\lda^{n}))
\label{eq:one-side}
\end{align}
with $A>0$.
On the other hand, let $q>n-1$ and suppose  $K(\xi)-K(\xi_1)\le \delta |\xi-\xi_1|^q$, where $\delta>0$ is to be fixed.  It follows that
\begin{align*}
&\int_{\pa B_1} K v_\beta^{\frac{2(n-1)}{n-2}}=2\int_{\pa B_1\cap \{x_{n}<0\}}  K v_{1,\beta}^{\frac{2(n-1)}{n-2}}\\&
=2 (K(\xi_1)\int_{\pa B_1\cap \{x_{n}<0\}}   v_{1,\beta}^{\frac{2(n-1)}{n-2}}+\delta \int_{\pa B_1\cap \{x_{n}<0\}}   |\xi-\xi_1|^q v_{1,\beta}^{\frac{2(n-1)}{n-2}})
\nonumber  \\& \le 2K(\xi_1) (n \w_n + \delta C(n,q) \lda^{n-1}).
\label{eq:other-side}
\end{align*}
Setting $\frac{n}{n-1}\delta C(n,q)<A$, for small $\lda$ we have
\begin{align*}
\frac{\int_{B_1} |\mathcal{P}v_\beta|^{\frac{2n}{n-2}}\,\ud \xi}{(\int_{\pa B_1} K v_\beta^{\frac{2(n-1)}{n-2}})^{\frac{n}{n-1}}}
> \frac{2\w_n}{(2K(\xi_1) n \w_n)^{\frac{n}{n-1}}}=\frac{S(n)^{\frac{2n}{n-2}}}{(\min_{\pa B_1} K)^{\frac{n}{n-1}}2^{1/(n-1)}}.
\end{align*}

Therefore, we complete the proof.

\end{proof}

\begin{proof}[Proof of Theorem \ref{thm:A}] It follows immediately from Proposition \ref{prop:existence} and Proposition \ref{prop:strict}.

\end{proof}

\small

\bigskip

\noindent School of Mathematical sciences, Beijing Normal University\\
Beijing 100875, China\\[1mm]
Email: \textsf{jx@bnu.edu.cn}

\end{document}